\def\XXint#1#2#3{{\setbox0=\hbox{$#1{#2#3}{\int}$} 
\vcenter{\hbox{$#2#3$}}\kern-.5\wd0}}
 \newcommand{\bZ}{\mathbb{Z}}
\newcommand{\bR}{\mathbb{R}} \newcommand{\bN}{\mathbb{N}}
\newcommand{\vf}{\mathbf{f}}
\newcommand{\ve}{\mathbf{e}}
\newcommand{\bT}{\mathbb{T}}
\newcommand{\bC}{\mathbb{C}}
\newcommand{\cS}{\mathcal{S}}
\def\G{\mathcal{G}}
\DeclareMathOperator*{\esssup}{ess\,sup}
\newcommand{\ra}{\rightarrow}
\newcommand{\bey}{\begin{eqnarray*}}
\newcommand{\eey}{\end{eqnarray*}}
\newcommand{\ba}{\begin{align}}
\newcommand{\ea}{\end{align}}
\newcommand{\bea}{\begin{align*}}
\newcommand{\ena}{\end{align*}}
\newcommand{\be}{\begin{equation}}
\newcommand{\ee}{\end{equation}}
\newcommand{\R}{\mathbb R}
\newcommand{\Z}{\mathbb Z}
\newcommand{\C}{\mathbb C}
\newcommand{\N}{\mathbb N}
\newcommand{\A}{\mathcal A}
\newcommand{\T}{\mathbb T }
\newcommand{\cT}{\mathcal T }
\newcommand{\ZZ}{\mathcal Z}
\newcommand{\bc}{\begin{center}}
\newcommand{\ec}{\end{center}}
\newcommand{\vt}{\boldsymbol\tau}
\DeclareMathOperator{\Span}{Span}
\newtheorem{theorem}{Theorem}[section]
\newtheorem{corollary}[theorem]{Corollary}
\newtheorem{prop}[theorem]{Proposition}
\theoremstyle{definition}
\newtheorem{defn}[theorem]{Definition}
\theoremstyle{remark}
\newtheorem{remark}[theorem]{Remark}
\begin{document}


\subjclass[2010]{Primary 42C15, 46B15. Secondary 42C40}

\keywords{Multiple Gabor systems, Schauder basis, Muckenhoupt matrix condition, conditional convergence}

\title[]{On Schauder bases properties of multiply generated Gabor Systems }
\author{Morten Nielsen}

\begin{abstract} Let $\A$ be a finite subset of $L^2(\R)$ and $p,q\in\bN$. We characterize the Schauder basis properties in $L^2(\bR)$ of the Gabor system
$$G(1,p/q,\A)=\{e^{2\pi i m x}g(x-np/q) : m,n\in \Z, g\in \A\},$$
 with a specific ordering on $\Z\times \Z\times\A$. The characterization is given in terms of a Muckenhoupt matrix $A_2$ condition on an associated Zibulski-Zeevi type matrix.
\end{abstract}

\maketitle

\bigskip

\section{Introduction}
For a fixed function $g\in L^2(\bR)$, the corresponding  Gabor system is the collection of functions
$$G(a,b,g)=\{e^{2\pi i m a x}g(x-bn) : m,n\in \Z\}.$$
Such systems were introduced by Gabor with the aim to create sparse and efficient time-frequency localized expansions of signals using  elements from $G(a,b,g)$.
A major contribution to the theory of Gabor systems was made by Daubechies et al.\ \cite{MR836025} by  studying the problem of obtaining expansions relative to $G(a,b,g)$ in a Hilbert space frame setup giving a much more systematic approach to such expansions. The frame approach in \cite{MR836025} paved the way for a very extensive study of the frame properties of Gabor systems, see e.g.\ \cite{MR2428338,MR2744776,MR1955929} and references therein.
 
In this paper we consider  multiple-generated Gabor systems
of the form
$$G(1,p/q,\A)=\{e^{2\pi i m x}g(x-np/q) : m,n\in \Z, g\in \A\},$$
with $\A$ be a finite subset of $L^2(\R)$ and $p,q\in\bN$.

The are a number of interesting stability questions related to $G(1,p/q,\A)$. One immediate question is about completeness. 
We let $\G=\overline{\text{Span}}(G(1,p/q,\A))$. The question is then whether $\G=L^2(\bR)$ or $\G$ is a proper subspace of $L^2(\bR)$. This question was addressed by 
Zibulski and Zeevi in \cite{MR1448221}, where a complete characterization of completeness is given in terms of properties of a certain corresponding matrix generated by the so-called Zak transform. The Zak transform based approach by Zibulski and Zeevi will play a central role in the present paper.
 
 Frame properties of $G(1,p/q,\A)$ were considered in \cite{MR1448221} and further studied in \cite{MR2346803,MR1460623,MR2077516}. Multiple-generated Gabor Riesz bases for $L^2(\R)$ and/or for $\G$ (so-called Riesz sequences) we also characterized by Bownik and Christensen in \cite{MR2346803}, see also \cite{MR2077516} for the single window case.

Expansions relative to a Riesz basis converge unconditionally. In the present paper we move one step further to the borderline case where the convergence might be conditional and depend on the precise ordering of the system $G(1,p/q,\A)$.

We recall that an ordered family $B=\{x_n:n\in\bN\}$  of vectors in a
Hilbert space $H$ is a Schauder basis for $H$ if for every $x\in H$ there
exists a unique sequence $\{\alpha_n:=\alpha_n(x):n\in\bN\}$ of
scalars such that
\begin{equation}\label{eq:S}
  \lim_{N\rightarrow\infty}\sum_{n=1}^N \alpha_nx_n=x
  \end{equation}
in the norm topology of $H$. Clearly, any  Riesz basis for $H$ is also a Schauder basis for $H$. For Riesz bases the convergence in \eqref{eq:S} is unconditional. However, it is    well known that conditional Schauder bases exist.

We give a complete characterization of when the system $G(1,p/q,\A)$ with the proper ordering forms a Schauder basis for $\G$ and $L^2(\R)$.  The problem of characterizing Gabor Schauder bases in the case of one generator was first considered by Heil and Powell
\cite{MR2278667}. They obtained a complete characterization in terms a so-called Muckenhoupt $A_2$ condition on a certain multivariate weight obtained by the Zak transform. Our result will reproduce the result obtained in \cite{MR2278667} in the case of a singleton $\A$ and $p=q=1$. 
Our characterization will be given in terms of a certain matrix Muckenhoupt $A_2$ condition first introduced by the author in \cite{MR2737763}.
\section{Zak transform analysis of Gabor systems}
Let us introduce some notation that will be used throughout the paper.
For $p\in\N$, we define the domain 
\begin{equation}\label{eq:Tp}
  T_p:=[0,1)\times [0,1/p).
  \end{equation}
 We call a measurable (periodic) matrix function $W:T_p\ra \C^{N\times N}$ a matrix weight if  $W(x)$ is non-negative definite Hermitian matrix for a.e.\ $x$, and $W$ is in $L^1$, i.e., the matrix norm $\|W\|$ belongs to $L^1(T_p)$.  
 
 We define the matrix weighted $L^2$, denoted $L^2(T_p,W)$ to be the set of vector functions $\vf:T_p\rightarrow \bC^N$ such that 
\begin{align*}\|\vf\|_{L^2(T_p,W)}^2&:=\int_0^1\int_0^{1/p} |W^{1/2}(x,u)\vf(x,u)|^2\, dx\,du\\&=\int_0^1\int_0^{1/p} \langle W(x,u)\vf(x,u),\vf(x,u)\rangle_{\C^N}\, dx\,du<\infty,
\end{align*}
where the Lebesgue measure is used.
We can turn $L^2(T_p,W)$ into a Hilbert space by factorizing over $N=\{\vf:\|\vf\|_{L^2(T_p,W)}=0\}$. Whenever $W(x)$ is strictly positive definite, $N$ is exactly the set of vector functions $\vf$ defined on $T_p$ that vanish a.e.\

The main tool we will use to analyze Gabor systems is the Zak transform. 
The Zak transform is defined for $f\in L^2(\R)$ by
$$Zf(x,u)=\sum_{k\in \Z} f(x-k) e^{2\pi i ku},\qquad x,u\in\T.$$
The Zak transform is a unitary transform of $f\in L^2(\R)$ onto $L^2(\T^2)$.

We now turn to the multiple-generated Gabor setup. Suppose we have $L$ generators $\A=\{g^\ell\}_{l=0}^{L-1}\subset L^2(\R)$. For fixed $p,q\in\N$, the associated Gabor system is given by
$$\G:=G(1,p/q,\A)=\{g_{m,n}^\ell\}_{m,n\in\Z},$$
with $g_{m,n}^\ell:=e^{2\pi i m x}g^\ell(x-np/q)$. A straightforward calculation shows (see e.g.\  \cite[Lemma 3.2]{MR2077516}),
\begin{equation}\label{eq:Za}
  Zg_{m,(nq+r)}^\ell(x,u)=e^{2\pi i m  x}e^{-2\pi i n p u} Zg^\ell\bigg(x-r\frac{p}{q},u\bigg),\qquad 0\leq r<q,
  \end{equation}
  For notational convenience, we put \begin{equation}\label{eq:emn}
  E_{m,n}(x,u):=e^{2\pi i mx}e^{-2\pi i n p u}.
  \end{equation}
Let us consider a finite expansion,
\begin{equation}\label{eq:f}
  f=\sum_{\ell=0}^{L-1}\sum_{r=0}^{q-1}\sum_{m,n\in\Z}c^\ell_{m,nq+r}g^\ell_{m,nq+r}.
  \end{equation}
Put
\begin{equation}\label{eq:tau}
  \tau_r^l(x,u)=\sum_{m,n\in\Z} c^\ell_{m,nq+r}E_{m,n}(x,u).
  \end{equation}
  Then by \eqref{eq:Za},
  \begin{equation}\label{eq:Z}
  Zf(x,u)=\sum_{\ell=0}^{L-1}\sum_{r=0}^{q-1} \tau_r^\ell(x,u)Zg^\ell\bigg(x-r\frac p q,u\bigg).
  \end{equation}
 We now follow  the approach of Zibulski and Zeevi \cite{MR1448221} and introduce a convenient matrix notation. We let $G^\ell:=G^\ell(x,u)$ be the $q\times p$-matrix given by
 \begin{equation}\label{eq:G}
  G^\ell_{r,k}=Zg^\ell\bigg(x-r\frac pq,u+\frac kp\bigg),\quad 0\leq r<q,\quad0\leq k<p.
  \end{equation} 
  We form the $Lq\times p$-matrix
\begin{equation}\label{eq:G1}
    G=\begin{bmatrix}G^0\\G^1\\\vdots\\G^{L-1}\end{bmatrix},
  \end{equation}
  and put 
  \begin{equation}\label{eq:We}
  W=GG^*\geq 0. 
  \end{equation}
  
  We mention that to study completeness in $L^2(\R)$ of the 
  system  $G(1,p/q,\A)$  and to study its frame properties one often turns to the  $p\times p$-matrix $G^*G$, see e.g. \cite{MR1448221,MR2346803}. For example, $\G=L^2(\R)$ if and only if $G^*G$ has full rank a.e. However, as we will se below, basis properties of 
  $G(1,p/q,\A)$ are more closely related to properties of the matrix in \eqref{eq:We}.
  
  Notice that $W$ given by \eqref{eq:We} is an $Lq\times Lq$-matrix, with entry $(sq+r,tq+\ell)$ given by
  $$\sum_{k=0}^{p-1} 
  Zg^s\bigg(x-r\frac p q,u+\frac k p\bigg)
  \overline{Zg^t\bigg(x-\ell\frac p q,u+\frac k p\bigg)}.$$

  Also notice that each entry in $W$ is in $L^1(T_p)$. This follows from the Cauchy-Schwarz inequality using that $Zg^s\in L^2(\bT^2)$, $s=0,\ldots L-1$.
  
  We now form the vector
  $\vt(u,x)\in \C^{Lq}$ by letting $(\tau(u,x))_{sq+r}=\tau_r^s(x,u)$, $0\leq s\leq L-1$, $0\leq r<	q$. We have the following result

  \begin{theorem}\label{th:a} Let $p,q\in\N$. Suppose $\A=\{g^0,\ldots, g^{L-1}\} \subset L^2(\R)$, $\G=\overline{\Span}\{G(1,p/q,\A)\}$, and $W$ is the non-negative definite matrix given by \eqref{eq:We}.   Then the map $\ZZ:L^2(T_p,W)\ra \G$ given by
\begin{equation}\label{eq:ZZ}
  \ZZ(\vf)=Z^{-1}\left(\sum_{\ell=0}^{L-1}\sum_{r=0}^{q-1} (\vf)_{\ell q+r}Zg^\ell\bigg(x-r\frac p q,u\bigg)\right),\quad \vf=[f_0,\ldots,f_{Lq-1}]^T,
  \end{equation}
is an  isometric isomorphism.
\end{theorem}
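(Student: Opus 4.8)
The plan is to verify the three defining features of an isometric isomorphism in turn: that $\ZZ$ descends to a well-defined linear map on the quotient Hilbert space $L^2(T_p,W)$, that it preserves norms, and that its range is exactly $\G$. A useful first remark is that well-definedness comes for free from the isometry: since $\ZZ$ is manifestly linear in $\vf$, once we know $\|\ZZ\vf\|_{L^2(\R)}=\|\vf\|_{L^2(T_p,W)}$ for representatives, every $\vf$ in the null space $N$ is sent to $0$, so $\ZZ$ factors through $L^2(T_p,W)$ and is at the same time injective. I would therefore concentrate the work on the norm identity and on surjectivity.

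For the isometry I would start from the unitarity of the Zak transform, which already gives $\|\ZZ\vf\|_{L^2(\R)}^2=\|F\|_{L^2(\T^2)}^2$, where $F(x,u)=\sum_{\ell,r}(\vf)_{\ell q+r}(x,u)\,Zg^\ell(x-r\frac pq,u)$ is the bracketed expression in \eqref{eq:ZZ}. The heart of the matter is to convert the integral over $\T^2=[0,1)^2$ into one over $T_p=[0,1)\times[0,1/p)$. First I observe that each coefficient function $(\vf)_{\ell q+r}$, being built from the exponentials $E_{m,n}$ of \eqref{eq:emn}, is $1/p$-periodic in the variable $u$. Splitting the $u$-integral over the $p$ cosets $[k/p,(k+1)/p)$ and substituting $u=v+k/p$, this periodicity lets me pull the coefficients out unchanged, while the shifted Zak transforms become exactly the entries $G^\ell_{r,k}(x,v)=Zg^\ell(x-r\frac pq,v+\frac kp)$ of the Zibulski--Zeevi matrix \eqref{eq:G}. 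Summing over $k$ then produces the Gram-type combination $GG^*=W$, so that $\sum_{k=0}^{p-1}|F(x,v+k/p)|^2=\langle W(x,v)\vf,\vf\rangle_{\C^{Lq}}$ pointwise; integrating over $T_p$ by Tonelli (legitimate for the nonnegative integrand $|F|^2$) yields $\|F\|_{L^2(\T^2)}^2=\|\vf\|_{L^2(T_p,W)}^2$. Finiteness of the right-hand side simultaneously shows $F\in L^2(\T^2)$, so that $Z^{-1}F$ is meaningful for every $\vf\in L^2(T_p,W)$. The only point needing care is the consistent placement of the complex conjugates coming from the Hermitian (rather than symmetric) nature of $W$.

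Since $\ZZ$ is an isometry on the complete space $L^2(T_p,W)$, its range is automatically a closed subspace of $L^2(\R)$, and I would identify it with $\G$ in two steps. First, taking $\vf$ to be the coordinate vector with the single entry $E_{m,n}$ in slot $\ell q+r$, identity \eqref{eq:Za} gives $\ZZ(\vf)=g^\ell_{m,nq+r}$; hence every generator of $G(1,p/q,\A)$ lies in the range, and as the range is closed it contains $\G=\overline{\Span}\,G(1,p/q,\A)$. Second, because an isometry is a homeomorphism onto its range, it suffices to know that the family $\{E_{m,n}\,\ve_{\ell q+r}\}$ is total in $L^2(T_p,W)$: its image $\Span\,G(1,p/q,\A)$ is then dense in the range, forcing the range to equal $\overline{\Span}\,G(1,p/q,\A)=\G$.

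The main obstacle is precisely this totality, equivalently the density of vector-valued trigonometric polynomials in $L^2(T_p,W)$. The difficulty is that $W$ is only assumed to be $L^1$ and may be degenerate, so an element of $L^2(T_p,W)$ need not belong to the unweighted $L^2(T_p,\C^{Lq})$, and ordinary Fourier-series density does not apply directly. I would exploit that $\|W\|\in L^1(T_p)$: for a bounded continuous vector function $h$ one has $\|h\|_{L^2(T_p,W)}^2\le\|h\|_\infty^2\,\|W\|_{L^1(T_p)}$, so uniform approximation by trigonometric polynomials (Fej\'er/Stone--Weierstrass) already gives approximation in the weighted norm. The genuinely delicate remaining step is to approximate an arbitrary $\vf\in L^2(T_p,W)$ by bounded continuous functions in $\|\cdot\|_{L^2(T_p,W)}$, and here I would invoke the approximation results for matrix-weighted $L^2$ spaces developed in \cite{MR2737763}.
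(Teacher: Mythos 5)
Your proposal is correct and follows essentially the same route as the paper: unitarity of the Zak transform, folding the $u$-integral over the $p$ cosets of $[0,1/p)$ to produce the Gram matrix $W=GG^*$, and density of trigonometric polynomial vectors to identify the range with $\G$. The only difference is one of bookkeeping — you establish the isometry directly for general $\vf$ via Tonelli, whereas the paper verifies it on polynomial vectors and extends by density — and you are in fact more careful than the paper in flagging (and sketching a proof of) the density of trigonometric polynomials in $L^2(T_p,W)$, which the paper simply asserts.
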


\begin{proof}
Let $\{\ve_j\}_{j=0}^{Lq-1}$ denote the standard basis for $\bC^{Lq}$. It follow from \eqref{eq:ZZ} and \eqref{eq:Za} that 
\begin{equation}\label{eq:ZZZ}
  \ZZ(\ve_{\ell q+r} E_{m,n})=g^\ell_{m,nq+r}.
  \end{equation}
Now take a vector-function
  $\vt(x,u)\in L^2(T_p,W)$ 
  of the form $$\vt(x,u):=\sum_{\ell=0}^{L-1}\sum_{r=0}^{q-1}\tau_r^\ell(x,u)\ve_{\ell q+r},$$
where each
\begin{equation*}\label{eq:tau1}
  \tau_r^\ell(x,u)=\sum_{m,n\in\Z} c^\ell_{m,nq+r}E_{m,n}(x,u)
  \end{equation*}
  is a trigonometric polynomial. We notice that by \eqref{eq:ZZZ} and linearity, 
  $$f:=\ZZ(\vt)=\sum_{\ell=0}^{L-1}\sum_{r=0}^{q-1}
  \sum_{m,n\in\Z}c^\ell_{m,nq+r}g^\ell_{m,nq+r}.$$
Hence, using $\langle f,f\rangle=
\langle Zf,Zf\rangle$ and \eqref{eq:Z} we obtain,
  \begin{align*}
  \langle f,f\rangle&=
\begin{aligned}[t]
  \int_0^1\int_0^1&
  \sum_{\ell=0}^{L-1}\sum_{r=0}^{q-1} \tau_r^\ell(x,u)Zg^\ell\bigg(x-r\frac p q,u\bigg)\\
  &\times\overline{\sum_{t=0}^{L-1}\sum_{s=0}^{q-1}\tau_s^t(x,u)Zg^t\bigg(x-s\frac p q,u\bigg)}\,dxdu\nonumber
  \end{aligned}\\
  &=
\begin{aligned}[t]
\int_0^1\int_0^{1/p}&
  \sum_{r,\ell}\sum_{s,t}\tau_r^\ell(x,u)\\
&  \times\bigg[\sum_{k=0}^{p-1} Zg^\ell\bigg(x-r\frac p q,u+\frac k p\bigg)
  \overline{Zg^t\bigg(x-s\frac p q,u+\frac k p\bigg)}\bigg]\overline{\tau_s^t(x,u)}\,dx du\label{eq:ff} \end{aligned} \\
  &=\|\vt\|_{L^2(T_p,W)}^2.
      \end{align*}
      The vectors with trigonometric polynomial entries  are dense in $L^2(T_p,W)$, and the image under $\ZZ$ of such vectors are clearly dense in $\G$. Hence, we may conclude that $\ZZ$ extends to an isometric isometry from $L^2(T_p,W)$ onto $\G$.
\end{proof}  

\section{Bi-orthogonal systems and Schauder bases}
Let us recall some elementary facts about Schauder bases and bi-orthogonal sequences in a Hilbert space $H$
Suppose  $B=\{x_n:n\in\bN\}$ is a Schauder basis for $H$, i.e.\ 
that for every $x\in H$ there
exists a unique sequence $\{\alpha_n:=\alpha_n(x):n\in\bN\}$ of
scalars such that
\begin{equation*}
  \lim_{N\rightarrow\infty}\sum_{n=1}^N \alpha_nx_n=x
  \end{equation*}
in the norm topology of $H$.  The unique choice of scalars, and the fact that we are in a Hilbert space, implies
that $x\rightarrow \alpha_n(x)$ is a continuous linear functional for every
$n\in\bN$.  Furthermore, for every $n\in\bN$, there
exists a unique vector $y_n$ such that $\alpha_n(x)=\langle
x,y_n\rangle$. It follows that
\begin{equation}
  \label{eq:4}
  \langle x_m,y_n\rangle=\delta_{m,n},\quad m,n\in\bN.
\end{equation}

A pair of sequences $(\{u_n\}_{n\in\bN},\{v_n\}_{n\in\bN})$ in $H$
is a {\em bi-orthogonal system} if $\langle
u_m,v_n\rangle=\delta_{m,n}$, $m,n\in\bN$. We say that
$\{v_n\}_{n\in\bN}$ is a {\em dual sequence} to $\{u_n\}_{n\in\bN}$,
and vice versa. 

A dual sequence is not necessarily uniquely
defined. In fact, it is unique if and only if the original sequence is
complete in $H$ (i.e., if the span of the original sequence is dense
in $H$). 

Suppose  $B=\{x_n:n\in\bN\}$ is complete, and has a unique dual sequence
$\{y_n\}$. Then $B$ is a Schauder basis for $H$ if and only if the partial sum
operators $S_N(x)= \sum_{n=1}^N\langle x,y_n\rangle x_n$ are uniformly
bounded on $H$. 

Finally, we call $B=\{x_n:n\in\bN\}$ a basis sequence if it is a Schauder basis for its closed linear span.
\subsection{Bi-orthogonal Gabor systems}
We have the following result for multiple-generated Gabor systems. We let $\{\ve_j\}_{j=0}^{Lq-1}$ denote the standard basis for $\bC^{Lq}$.

\begin{prop}\label{th:1} Let $p,q\in\N$. Suppose $\A=\{g^0,\ldots, g^{L-1}\} \subset L^2(\R)$, and define  $\G=\overline{\Span}\{G(1,p/q,\A)\}$. Let $W$ be the non-negative definite matrix given by \eqref{eq:We}.   Then 
 $G(1,p/q,\A)$ has a uniquely defined bi-orthogonal system if and only if $W^{-1}\in L^1(T_p)$. In case  $W^{-1}\in L^1(T_p)$, the dual element to $g_{m,n q+r}^\ell$, $m,n\in \Z, 0\leq r<q$, is given by
$\widetilde{g_{m,nq+r}^\ell}:={p}\ZZ(W^{-1} E_{m,n}\ve_{\ell q+r})$.
\end{prop}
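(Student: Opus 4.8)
The plan is to transport the whole question into the weighted space $L^2(T_p,W)$ by means of the isometric isomorphism $\ZZ$ of Theorem \ref{th:a}, under which $g^\ell_{m,nq+r}=\ZZ(\ve_{\ell q+r}E_{m,n})$ by \eqref{eq:ZZZ}. Since $\G$ is by definition the closed span of $G(1,p/q,\A)$, the system is complete in $\G$, so by the facts recalled above any bi-orthogonal system is automatically unique; hence the existence of a \emph{uniquely defined} bi-orthogonal system is equivalent to the existence of any bi-orthogonal system, i.e.\ to minimality of the system. It therefore suffices to characterize when $\{\ve_{\ell q+r}E_{m,n}\}$ is minimal in $L^2(T_p,W)$ and, in that case, to exhibit its dual.

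The computational heart is the orthogonality relation $\int_0^1\int_0^{1/p}E_{m,n}\overline{E_{m',n'}}\,dx\,du=\tfrac1p\delta_{m,m'}\delta_{n,n'}$, so that $\{\sqrt{p}\,E_{m,n}\}$ is an orthonormal basis of the scalar space $L^2(T_p)$. For any $\vh\in L^2(T_p,W)$ I first record that $W\vh\in L^1(T_p)$: by Cauchy--Schwarz, $\int_{T_p}\|W\vh\|\le\big(\int_{T_p}\|W\|\big)^{1/2}\|\vh\|_{L^2(T_p,W)}$, and $\|W\|\in L^1$ since $W$ is a matrix weight. Thus the entries of $W\vh$ have well-defined Fourier coefficients. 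Expanding the inner product and using that $W$ is Hermitian gives $\langle\ve_{tq+s}E_{m,n},\vh\rangle_{L^2(T_p,W)}=\int_{T_p}E_{m,n}\,\overline{(W\vh)_{tq+s}}\,dx\,du$. Consequently $\vh$ satisfies the bi-orthogonality relations against every $\ve_{tq+s}E_{m,n}$ (i.e.\ is dual to $\ve_{\ell q+r}E_{0,0}$) if and only if all these Fourier coefficients vanish except the constant one, i.e.\ if and only if $W\vh=p\,\ve_{\ell q+r}$ a.e.\ on $T_p$.

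For sufficiency, assume $W^{-1}\in L^1(T_p)$, so $W$ is invertible a.e., and set $\vh^\ell_{m,n,r}:=p\,W^{-1}E_{m,n}\ve_{\ell q+r}$. Its norm is $\|\vh^\ell_{m,n,r}\|^2_{L^2(T_p,W)}=p^2\int_{T_p}(W^{-1})_{\ell q+r,\ell q+r}\le p^2\int_{T_p}\|W^{-1}\|<\infty$, so $\vh^\ell_{m,n,r}\in L^2(T_p,W)$; since now $W\vh^\ell_{m,n,r}=p\,E_{m,n}\ve_{\ell q+r}$, the identity above together with the orthogonality of the $E_{m,n}$ yields the full bi-orthogonality relations, and applying $\ZZ$ gives the asserted dual $\widetilde{g^\ell_{m,nq+r}}=p\,\ZZ(W^{-1}E_{m,n}\ve_{\ell q+r})$.

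For necessity, suppose the system is minimal. Then for each $j=\ell q+r$ the element $\ve_j E_{0,0}$ has a dual $\vh^{(j)}\in L^2(T_p,W)$, and by the equivalence above $W\vh^{(j)}=p\,\ve_j$ a.e. In particular $\ve_j\in\operatorname{ran}W(x)$ for a.e.\ $x$; as this holds for every $j\in\{0,\dots,Lq-1\}$, the square matrix $W(x)$ is surjective, hence invertible, for a.e.\ $x$, whence $\vh^{(j)}=p\,W^{-1}\ve_j$. Finiteness of $\|\vh^{(j)}\|_{L^2(T_p,W)}^2=p^2\int_{T_p}(W^{-1})_{jj}$ for all $j$ then gives $\int_{T_p}\operatorname{tr}(W^{-1})<\infty$, and since $\|W^{-1}\|\le\operatorname{tr}(W^{-1})$ this is exactly $W^{-1}\in L^1(T_p)$. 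I expect this direction to be the main obstacle: one must rule out that $W$ is singular on a set of positive measure, which is precisely where the solvability of $W\vh^{(j)}=p\,\ve_j$ for \emph{all} $j$ is exploited, and one must justify that the bi-orthogonality relations pin down $W\vh$ pointwise a.e.\ rather than merely distributionally, which is what the $L^1$ membership of $W\vh$ secures.
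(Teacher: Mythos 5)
Your proposal is correct and follows essentially the same route as the paper: transport the problem to $L^2(T_p,W)$ via the isometry $\ZZ$, verify bi-orthogonality of $p\,\ZZ(W^{-1}E_{m,n}\ve_{\ell q+r})$ by direct computation for sufficiency, and for necessity use that $W\vh\in L^1(T_p)$ together with injectivity of the Fourier transform on $L^1(T_p)$ to pin down $W\vh$ pointwise and deduce that $W$ is invertible a.e. Your final step, extracting $W^{-1}\in L^1(T_p)$ from $\|\vh^{(j)}\|^2_{L^2(T_p,W)}=p^2\int_{T_p}(W^{-1})_{jj}$ via the trace, is a welcome explicit justification of a point the paper leaves implicit.
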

\begin{proof}
Let us first suppose that  $W^{-1}\in L^1(T_p)$. We  notice that 
\begin{align*}\|W^{-1}E_{m,n}\ve_{\ell q+r}\|_{L^2(T_p,W)}^2&=
\int_0^1\int_0^{1/p} \langle WW^{-1}E_{m,n}\ve_{\ell q+r},W^{-1}E_{m,n}\ve_{\ell q+r}\rangle_{\C^{Lq}}\, dx\,du\\
&=\int_0^1\int_0^{1/p} (W^{-1})_{\ell q+r,\ell q+r}(x,u) dxdu<\infty,
\end{align*}
so $\widetilde{g_{m,nq+r}^\ell}=\ZZ(W^{-1} E_{m,n}\ve_{\ell q+r})$ is well-defined. We have, for $n,n',m,m'\in \Z$, $0\leq r,r'<q$, and $0\leq \ell,\ell<L$,
\begin{align*}
\langle \widetilde{g_{m',n'q+r'}^{\ell'}} ,g_{m,nq+r}^\ell\rangle_{L^2(\R)}&=
p\langle  \ZZ(W^{-1} E_{m',n'}\ve_{\ell' q+r'}),\ZZ(\ve_{\ell q+r} E_{m,n})\rangle_{L^2(\R)}\\
&=p\langle  W^{-1} E_{m',n'}\ve_{\ell' q+r'},\ve_{\ell q+r} E_{m,n}\rangle_{L^2(T_p,W)}\\
&=p\int_0^1\int_0^{1/p} E_{m-m',n'-n} \langle \ve_{\ell' q+r'},\ve_{\ell q+r}\rangle_{\C^{Lq}} dxdu\\
&=\delta_{m,m'}\delta_{n,n'}\delta_{\ell,\ell'}\delta_{r,r'},
\end{align*}
 so $\{\widetilde{g_{m,n}^\ell}\}$ is indeed  a dual sequence to $G(1,p/q,\A)$.
 
We turn to the converse statement.  Suppose that $\{\widetilde{g_{m,n}^\ell}\}\subset \G\subseteq L^2(\bR)$ is a dual sequence to  $G(1,p/q,\A)$. The map $\ZZ$ is onto $\G$, so we can write $\widetilde{g_{m,n}^\ell}=\ZZ(\vf_{m,n}^\ell)$ for some $\vf_{m,n}^\ell\in L^2(T_p,W)$. Then
\begin{align*}
\delta_{m,m'}\delta_{n,n'}\delta_{\ell,\ell'}\delta_{r,r'}&=\langle  g_{m,nq+r}^\ell,\widetilde{g_{m',n'q+r'}^{\ell'}}\rangle_{L^2(\R)}\\
&=\langle \ZZ(\ve_{\ell q+r} E_{m,n}), \ZZ(\vf_{m',n'}^{\ell'})\rangle_{L^2(\R)}\\
&=\int_0^1\int_0^{1/p} (\vf_{m',n'}^{\ell'})^H W\ve_{\ell q+r}E_{m,n}(x,u)\,dxdu.
\end{align*}
We have $(\vf_{m',n'}^{\ell'})^H W\ve_{\ell q+r}\in L^1(T_p)$ since $\vf_{m',n'}^{\ell'}\in L^2(T_p,W)$ and $W\in L^1(T_p)$.
Also,  $\{pE_{m,n} \}$ forms on orthonormal trigonometric basis for $L^2(T_p)$, and since the Fourier transform is injective on $L^1(T_p)$, we get that for a.a.\ $(x,u)\in T_p$,
$$ (\vf_{m',n'}^{\ell'}(x,u))^H W(x,u)=pE_{-m,-n}(x,u) \ve_{\ell q+r}^T.$$
Hence $W$ has full rank a.e.\ and we may solve for $\vf_{m',n'}$ to get
$$\vf_{m',n'}=pW^{-1}(E_{m,n}\ve_{\ell q+r}).$$
\end{proof}
\subsection{Rectangular partial sums}
As before consider  $\A=\{g^0,\ldots, g^{L-1}\} \subset L^2(\R)$, and let $\G=\overline{\Span}\{G(1,p/q,\A)\}$.
Let us suppose that  $W^{-1}\in L^1(T_p)$ so that $G(1,p/q,\A)$ has a dual system in $\G$. For any $f=\ZZ(\vt)\in \G$ ,and $N_1,N_2\in \N$, we consider  the rectangular partial sum operator $T_{N_1,N_2}:\G\rightarrow \G$ given by
$$T_{N_1,N_2} f=\sum_{\ell=0}^{L-1}\sum_{r=0}^{q-1}
  \sum_{|m|\leq N_1, |n|\leq N_2}\langle f,\widetilde{g^\ell_{m,nq+r}}\rangle g^\ell_{m,nq+r}.$$
  We would like to study the boundedness properties of $\{T_{N_1,N_2}\}_{N_1,N_2}$ on $\G$. We mention that it is necessary for $T_{N_1,N_2}$ to be uniformly bounded on $\G$ if $G(1,p/q,\A)$ forms a Schauder basis for $\G$ with an ordering ``compatible'' with rectangular partial sums. However, proving uniform boundedness of $\{T_{N_1,N_2}\}_{N_1,N_2}$ is not quite enough to conclude that $G(1,p/q,\A)$ forms a Schauder basis for $\G$. We study this detain in Section \ref{sec:s} below.
  
From Proposition \ref{th:1} we obtain that $\widetilde{g_{m,nq+r}^\ell}:={p}\ZZ(W^{-1} E_{m,n}\ve_{\ell q+r})$. Therefore 
\begin{align*}{S_{N_1,N_2}} \vt&:=\sum_{\ell=0}^{L-1}\sum_{r=0}^{q-1}
  \bigg(\sum_{|m|\leq N_1, |n|\leq N_2}\langle \ZZ(\vt),\ZZ(pE_{m,n}W^{-1}\ve_{\ell q +r})\rangle_{L^2(\R)} E_{m,n}\bigg)\ve_{\ell q +r}\\
  &=
 \sum_{\ell=0}^{L-1}\sum_{r=0}^{q-1}
  \bigg(\sum_{|m|\leq N_1, |n|\leq N_2}\langle \vt,pE_{m,n}W^{-1}\ve_{\ell q +r}\rangle_{L^2(T_p,W)} E_{m,n}\bigg)\ve_{\ell q +r}\\
&=  \sum_{\ell=0}^{L-1}\sum_{r=0}^{q-1}
  \bigg(\sum_{|m|\leq N_1, |n|\leq N_2}\langle \vt,pE_{m,n}\ve_{\ell q +r}\rangle_{L^2(T_p)} E_{m,n}\bigg)\ve_{\ell q +r}.
  \end{align*}
Also notice that $\ZZ({T_{N_1,N_2}} \vt)=pS_{N_1,N_2} f$ using \eqref{eq:Za}.
And since $\|f\|_{L^2(\R)}=\|\vt\|_{L^2(T_p,W)}$, we deduce that
$$p\|S_{N_1,N_2}\|_{\G\rightarrow \G}=\|{T_{N_1,N_2}}\|_{L^2(T_p,W)\rightarrow L^2(T_p,W)}.$$

We can thus study the boundedness of $\{S_{N_1,N_2}\}_{N_1,N_2}$ by studying properties of the trigonometric system in $L^2(T_p,W)$. The connection between convergence of Fourier series in a weighted $L^2$-space and the so-called Muckenhoupt condition on the weight was first made precise in the seminal paper\cite{MR0312139} by Hunt, Muckenhoupt, and Wheeden. In this paper, we need a Muckenhoupt condition in the matrix setting. Muckenhoupt matrix weights and their connection to convergence of Fourier series was studied by Treil and Volberg in \cite{TreVol:1997a,TreVol:1997b}.
  The following class of product Muckenhoupt weights was introduced  in \cite{MR2737763}.
  \begin{defn}
  Let $W$ be a $N\times N$ matrix weight on $T_p$, i.e., a (1,1/p)-periodic measurable function
defined on $T_p$  whose values are positive semi-definite $N\times N$ matrices.
We say that $W$ satisfies the
  Muckenhoupt product $A_2$-matrix-condition provided that
\begin{equation}\label{muc}
  \sup_{I\times J}\bigg\|\bigg(\frac1{|I\times J|}\int_I\int_J W(x,u) dxdu\bigg)^{1/2}
\bigg(\frac1{|I\times J|}\int_I\int_J W^{-1}(x,u) dxdu\bigg)^{1/2}\bigg\|<\infty,
\end{equation}
where the sup is over all rectangles $I\times J \subset T_p$. The collection of all such
weights is denoted $\mathbb{A}_2(T_p)$. 
\end{defn}

We notice that $W\in \mathbb{A}_2$ implies that $W,W^{-1}\in L^1(T_p)$. It is not difficult to prove (see \cite[Lemma 3.4]{MR2737763}) that $W\in \mathbb{A}_2$  implies that $ W(x,\cdot)$ and $W(\cdot,u)$ are uniformly in the corresponding univariate matrix $A_2$ class for a.e.\ $x$, respectively a.e.\ $u$. So for the $u$ variable, we have
\begin{equation}\label{muc1}
\esssup_{u\in [0,1/p)}  \sup_{I}\bigg\|\bigg(\frac1{|I|}\int_I W(x,u) dxdu\bigg)^{1/2}
\bigg(\frac1{|I|}\int_I W^{-1}(x,u) dx\bigg)^{1/2}\bigg\|<\infty,
\end{equation}
and similar for $W(\cdot,u)$. This fact will be used in the proof of Theorem \ref{th:1}.

We can now call on the following product version of the Muckenhoupt-Hunt-Wheeden Theorem proved in 
by the author in \cite{MR2737763}.

\begin{theorem}\label{th:mmm}
Let $W:T_p\rightarrow \bC^{Lq\times Lq}$ be a matrix weight with $W,W^{-1}\in L^{1}(T_p)$. Let $\{\ve_j\}_{j=0}^{Lq-1}$ denote the standard basis for $\bC^N$.
Then the rectangular
partial sum operators
$$S_{{N_1,N_2}}\vf(x,u):=\sum_{\ell=0}^{L-1}\sum_{r=0}^{q-1}\bigg(\sum_{m,n\in\bZ:|m|\leq N_1,|n|\leq N_2} \langle \vf,pE_{m,n}\ve_{\ell q+r}\rangle_{L^2(T_p)}E_{m,n}(x,u)\bigg)\ve_{\ell q+r},$$ 
are uniformly bounded on $L_2(T_p;W)$ if and only if\, $W\in \mathbb{A}_2$.
\end{theorem}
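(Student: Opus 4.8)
The plan is to exploit the product (tensor) structure of $S_{N_1,N_2}$ and reduce to the univariate matrix Muckenhoupt--Hunt--Wheeden theorem of Treil and Volberg, one variable at a time. Write $S_{N_1,N_2}=S_{N_1}^{x}S_{N_2}^{u}$, where $S_{N_1}^{x}$ is the partial-sum operator in the exponentials $e^{2\pi imx}$, $|m|\le N_1$, acting with $u$ frozen, and $S_{N_2}^{u}$ is the partial-sum operator in $e^{-2\pi inpu}$, $|n|\le N_2$, acting with $x$ frozen. These two operators commute and each acts slicewise in a single variable. The governing observation is \eqref{muc1}: membership $W\in\mathbb A_2$ forces the slices $W(\cdot,u)$ and $W(x,\cdot)$ to lie, uniformly in the frozen variable, in the corresponding univariate matrix $A_2$ class.

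\textbf{Sufficiency.} Suppose $W\in\mathbb A_2$. Each univariate partial-sum operator is equivalent, via Fourier modulations, to a Riesz projection; since a modulation $\vf\mapsto E_{m,n}\vf$ multiplies every coordinate by a common unimodular scalar, it preserves $\langle W\vf,\vf\rangle$ pointwise and so is an isometry of $L^2(T_p,W)$. Thus uniform boundedness of $\{S_{N}^{x}\}_N$ on a slice $L^2([0,1),W(\cdot,u))$ is equivalent to boundedness of the (periodic) Hilbert transform there, which by Treil--Volberg holds with a constant depending only on the univariate $A_2$ constant of $W(\cdot,u)$; by \eqref{muc1} this is bounded uniformly in $u$. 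Because $S_{N_1}^{x}$ acts slicewise and $\|\vg\|_{L^2(T_p,W)}^2=\int_0^{1/p}\|\vg(\cdot,u)\|_{L^2_x(W(\cdot,u))}^2\,du$, its operator norm on $L^2(T_p,W)$ is bounded by $\esssup_u$ of the slice norms; hence $\sup_{N_1}\|S_{N_1}^{x}\|_{L^2(T_p,W)}<\infty$, and symmetrically $\sup_{N_2}\|S_{N_2}^{u}\|_{L^2(T_p,W)}<\infty$. Composing the two bounds gives $\sup_{N_1,N_2}\|S_{N_1,N_2}\|_{L^2(T_p,W)}<\infty$.

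\textbf{Necessity.} This is the substantive direction. First I would remove the dependence on $N_1,N_2$: conjugating by modulations and letting $N_1,N_2\to\infty$, the operators $S_{N_1,N_2}$ converge strongly on trigonometric vector polynomials to a modulated double Riesz projection, so that uniform boundedness of $\{S_{N_1,N_2}\}$ yields $L^2(T_p,W)$-boundedness of the two-parameter Hilbert transform $\mathcal H=H^{x}H^{u}$. I would then extract \eqref{muc} by a testing argument over rectangles. Fix $R=I\times J\subset T_p$ and a unit vector $\ve\in\C^{Lq}$, and test $\mathcal H$ on $\vf=\chi_R W^{-1}\ve$, for which $\|\vf\|_{L^2(T_p,W)}^2=\langle\big(\int_R W^{-1}\big)\ve,\ve\rangle$. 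On a translate $R'$ of $R$ of comparable size and distance the product kernel is essentially constant, so $\mathcal H\vf\approx\vv:=\big(\frac1{|R|}\int_R W^{-1}\big)\ve$ on $R'$, and boundedness gives
\[
\Big\langle\Big(\int_{R'}W\Big)\vv,\vv\Big\rangle\ \lesssim\ \Big\langle\Big(\int_R W^{-1}\Big)\ve,\ve\Big\rangle .
\]
Writing $A=\avgR W$, $B=\avgR W^{-1}$, $\vv=B\ve$ and $\vu=B^{1/2}\ve$, and replacing the average of $W$ over $R'$ by $A$ (so the left side is $\approx|R'|\langle B^{1/2}AB^{1/2}\vu,\vu\rangle$ while the right side is $|R|\,|\vu|^2$), the bound together with $|R'|\approx|R|$ becomes $\langle B^{1/2}AB^{1/2}\vu,\vu\rangle\lesssim|\vu|^2$. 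Since $B$ is invertible, $\vu$ ranges over all of $\C^{Lq}$, whence $\|A^{1/2}B^{1/2}\|^2=\|B^{1/2}AB^{1/2}\|\lesssim1$, which is precisely \eqref{muc}.

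\textbf{Main obstacle.} The delicate points are all in the necessity proof. The first is the passage from the Fourier-multiplier partial sums to a genuine product singular integral whose kernel can be estimated; the modulation-and-limit step must be carried out so that the resulting bound for $\mathcal H$ is uniform. The second, and the hardest, is the lower bound ``$\mathcal H\vf$ is nearly the constant $\vv$ on $R'$'' in the two-parameter, matrix-valued setting, together with the replacement of the $R'$-average of $W$ by $A=\avgR W$: matrix weights are not pointwise doubling, so this comparison is not automatic and must be forced by a covering / self-improvement argument over rectangles, as in the scalar bi-parameter theory, while carefully tracking the non-commutativity of the averaged matrices.
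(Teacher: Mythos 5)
The first thing to note is that the paper does not prove Theorem \ref{th:mmm} at all: it imports the result from \cite{MR2737763} and merely remarks that the proof given there for $\bT^2$ transfers verbatim to $T_p$. So there is no in-paper argument to match yours against line by line; I can only measure your proposal against the ingredients the paper does quote from that reference. Your sufficiency half is correct and is visibly the intended mechanism: factor $S_{N_1,N_2}=S_{N_1}^{x}S_{N_2}^{u}$, observe that scalar unimodular modulations are isometries of $L^2(T_p,W)$, use \eqref{muc1} to get uniform univariate matrix $A_2$ control of the slices $W(\cdot,u)$ and $W(x,\cdot)$, apply the Treil--Volberg univariate theorem slicewise (this is exactly the quoted bound \eqref{eq:bd}), and integrate the slices by Fubini before composing the two one-variable bounds. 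This is the same machinery the paper itself deploys later in the proof of Theorem \ref{th:2}.

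The necessity half is where your write-up is genuinely incomplete, as you yourself flag. Two concrete points. First, the claim that ``$\mathcal H\vf\approx\vv$ on $R'$'' as a vector identity is both stronger than needed and not what the kernel estimate gives; what one actually proves is the quadratic-form lower bound $\langle\mathcal H(\chi_RW^{-1}\ve)(x,u),\ve\rangle\gtrsim\langle(\avgR W^{-1})\ve,\ve\rangle$ for $(x,u)$ in an adjacent, comparably sized and separated rectangle $R'$, using that the product kernel is a positive scalar comparable to $1/|R|$ there and that $t\mapsto\langle W^{-1}(t)\ve,\ve\rangle\ge0$; pairing against the same fixed vector $\ve$ is essential because the entries of $W^{-1}\ve$ carry no sign. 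Second, this testing only yields the two-rectangle inequality $\bigl\|\bigl(\tfrac1{|R'|}\int_{R'}W\bigr)^{1/2}\bigl(\avgR W^{-1}\bigr)^{1/2}\bigr\|\lesssim1$; to reach \eqref{muc} you do not need any doubling or covering argument, but rather the symmetric test (swap $R$ with $R'$ and $W$ with $W^{-1}$) combined with the universal lower bound $\|(\avgR W)^{1/2}(\avgR W^{-1})^{1/2}\|\ge1$, which lets you absorb the mismatch between the $R$- and $R'$-averages. Neither step is carried out in your proposal. I would also point out a shorter route to necessity that avoids the double Hilbert transform entirely: since $S_{N_1}^{x}$ acts only in $x$ and preserves $u$-supports, uniform boundedness of the rectangular sums localizes to a.e.\ slice, the univariate Treil--Volberg necessity gives uniform slice $A_2$ in each variable, and the two-sided version of \eqref{muc1} (the equivalence of the product condition with uniform slice conditions, cf.\ Lemma 3.4 of \cite{MR2737763}) then yields \eqref{muc}. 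As written, your necessity argument is a correct strategy with the hard steps named but not executed.
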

\begin{remark}
Theorem \ref{th:mmm} is stated for weights on the torus $\bT^2$ in  \cite{MR2737763}, but the proof in \cite{MR2737763} translates verbatim to the case of the domain $T_p$.
\end{remark}

We can now deduce the following result.

\begin{corollary}\label{cor:1}
Let $p,q\in\N$. Suppose $\A=\{g^0,\ldots, g^{L-1}\} \subset L^2(\R)$, and define  $\G=\overline{\Span}\{G(1,p/q,\A)\}$. Let $W$ be the non-negative definite matrix given by \eqref{eq:G}.   Then the partial sum operators $\{S_{{N_1,N_2}}\}_{Ñ_1,Ñ_2\in\N}$ are uniformly bounded on $\G$ if and only if $W\in \mathbb{A}_2(T_p)$.
\end{corollary}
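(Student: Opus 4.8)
The plan is to obtain Corollary~\ref{cor:1} as a direct transport of Theorem~\ref{th:mmm} across the isometric isomorphism $\ZZ\colon L^2(T_p,W)\to\G$ furnished by Theorem~\ref{th:a}; essentially all of the work linking the two sides has already been carried out in the preceding subsection, so the role of the proof is to combine that computation with Theorem~\ref{th:mmm}. Concretely, I would first recall the operator-norm identity established just above the statement. Using the explicit form of the dual elements $\widetilde{g^\ell_{m,nq+r}}=p\,\ZZ(W^{-1}E_{m,n}\ve_{\ell q+r})$ from Proposition~\ref{th:1} together with the isometry of $\ZZ$, each Gabor coefficient $\langle f,\widetilde{g^\ell_{m,nq+r}}\rangle$ collapses to an ordinary Fourier coefficient $p\,\langle\vt,E_{m,n}\ve_{\ell q+r}\rangle_{L^2(T_p)}$, since the weight $W$ in the inner product of $L^2(T_p,W)$ cancels against the $W^{-1}$ carried by the dual element. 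Hence $\ZZ$ intertwines the Gabor partial sum operators on $\G$ with the trigonometric rectangular partial sum operators on $L^2(T_p,W)$ appearing in Theorem~\ref{th:mmm}, and since $\ZZ$ is isometric this yields $p\,\|S_{N_1,N_2}\|_{\G\to\G}=\|T_{N_1,N_2}\|_{L^2(T_p,W)\to L^2(T_p,W)}$; in particular the family on $\G$ is uniformly bounded if and only if the family on $L^2(T_p,W)$ is.

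Next I would verify that the hypotheses of Theorem~\ref{th:mmm}, namely $W,W^{-1}\in L^1(T_p)$, are in force. The membership $W\in L^1(T_p)$ is automatic: it is the Cauchy--Schwarz observation recorded just after \eqref{eq:We}, using $Zg^s\in L^2(\bT^2)$. The membership $W^{-1}\in L^1(T_p)$ is precisely the condition under which, by Proposition~\ref{th:1}, the dual system exists; this is the standing assumption that makes the operators $S_{N_1,N_2}$ well defined in the first place, and it is in any case forced in the nontrivial direction, since $W\in\mathbb{A}_2(T_p)$ implies $W^{-1}\in L^1(T_p)$. With both integrability conditions secured, Theorem~\ref{th:mmm} asserts that the trigonometric partial sum operators are uniformly bounded on $L^2(T_p,W)$ if and only if $W\in\mathbb{A}_2(T_p)$. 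Chaining this with the equivalence of the previous paragraph completes the proof.

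I do not anticipate a substantive obstacle here: the genuine analytic content—the matrix product Muckenhoupt characterization of norm convergence of multivariate Fourier series—is entirely encapsulated in Theorem~\ref{th:mmm}, while Theorem~\ref{th:a} reduces the Gabor question to that weighted trigonometric question isometrically. The only points demanding care are bookkeeping ones: tracking the harmless normalizing constant $p$, which does not affect uniform boundedness; matching the index $\ell q+r$ of the standard basis $\{\ve_j\}_{j=0}^{Lq-1}$ of $\bC^{Lq}$ consistently across the two formulations; and confirming the cancellation of $W$ against $W^{-1}$ that turns the weighted Gabor coefficients into unweighted Fourier coefficients in the displayed computation preceding the statement.
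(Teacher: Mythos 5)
Your proposal is correct and follows essentially the same route as the paper: the corollary is obtained by combining the operator-norm identity $p\,\|S_{N_1,N_2}\|_{\G\to\G}=\|T_{N_1,N_2}\|_{L^2(T_p,W)\to L^2(T_p,W)}$ derived in the subsection on rectangular partial sums (via Proposition~\ref{th:1} and the isometry $\ZZ$ of Theorem~\ref{th:a}) with the product Muckenhoupt--Hunt--Wheeden characterization of Theorem~\ref{th:mmm}. Your verification of the hypotheses $W,W^{-1}\in L^1(T_p)$ and the observation about the harmless factor $p$ match the paper's (implicit) argument.
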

\subsection{Schauder bases} \label{sec:s}
We now turn to the question of turning the system $G(1,p/q,\A)$ into a Schauder basis. Guided by Corollary \ref{cor:1} we need to find an enumeration of $G(1,p/q,\A)$ that respects the rectangular partial sums.

We follow Heil and Powell \cite{MR2278667} and  consider the following class of enumerations.

\begin{defn}\label{def:enu}
  Let $\Lambda$ be the set containing all
enumerations 
$\{(k_j,n_j)\}_{j=1}^\infty$ of $\bZ^2$ defined in the
following recursive manner.
\begin{enumerate}
\item The initial terms $(k_1,n_1)\ldots (k_{J_1},n_{J_1})$ are either
$$(0,0),(1,0),(-1,0),\ldots (A_1,0),(-A_1,0)$$
or
$$(0,0),(0,1),(0,-1),\ldots,(0,B_1),(0,-B_1),$$
 for some positive integers $A_1$ or $B_1$.
\item If $\{(k_j,n_j)\}_{j=1}^{J_k}$ has been constructed to be of the form
$\{-A_k,\ldots,A_k\}\times \{-B_k,\ldots,B_k\}$ for some non-negative integers $A_k$, $B_k$, then terms are added to either the top and bottom or the left and right sides to obtain either the rectangle 
$$\{-A_k,\ldots,A_k\}\times\{-(B_k+1),\ldots,B_k+1\}$$
or
$$\{-(A_k+1),\ldots,A_k+1\}\times\{-B_k,\ldots,B_k\}.$$
For example, terms would first be added to the left side ordered as
$$(-(A_k+1),0),(-(A_k+1),1),(-(A_k+1),-1),\ldots, (-(A_k+1),B_k),(-(A_k+1),-B_k),$$
and likewise for the right side. Top and bottom proceed analogously.
\end{enumerate}
\end{defn}
 
Given $\sigma\in\Lambda$, we lift $\sigma$ to an enumeration $\tilde{\sigma}$ of $\{0,1,\ldots,L-1\}\times\bZ^2$ defined as follows
\begin{equation}\label{def:sigma}
 (0,\sigma(1)),(1,\sigma(1)),\ldots,(Lq-1,\sigma(1)),(0,\sigma(2)),\ldots,
(Lq-1,\sigma(2)),(0,\sigma(3)),\ldots
\end{equation}

Let us now assume that $\A=\{g^0,g^1,\ldots,g^{L-1}\}\subset L_2(\bR)$ such that
the system $\G(1,p/q,\A)$ has a unique dual system $\{\widetilde{g_{m,n}^\ell}\}$ in $\G=\overline{\Span}\{G(1,p/q,\A)\}$. 

For $j\in\N$, we write  $\tilde{\sigma}(j)=(\ell_j,m_j,n_j)$. We let $G(\tilde{\sigma}(j)):=g_{m_j,n_j}^{\ell_j}$, $F(\tilde{\sigma}(j))=\widetilde{g_{m_j,n_j}^{\ell_j}}$,
and introduce 
the partial sum operators
$$\cT^\sigma_J f:=\sum_{j=1}^J \langle f,F(\tilde{\sigma}(j))\rangle G(\tilde{\sigma}(j)),\qquad f\in \G.$$
We also need to consider the associated partial sum operator in $L_2(\bT^2;W)$. Let $\{\ve_j\}_{j=0}^{Lq-1}$ be the standard basis for $\C^{Lq}$ Put $e(\ell,m,n):=E_{m,n}\ve_\ell$, and
$\tilde{e}(\ell,m,n):=\ZZ(pE_{m,n}W^{-1}\ve_\ell)$. Then
$$\cS^\sigma_J\vt:=\sum_{j=1}^J \langle \vt,\tilde{e}(\tilde{\sigma}(j)\rangle_{L_2(T_p,W)} e(\tilde{\sigma}(j)),\qquad \tau\in L_2(T_p,W),$$
satisfies $\ZZ(\cS^\sigma_J\tau)=\cT^\sigma_J f$ for $\ZZ(\vt)=f\in \G$.

It is now immediate from our general discussion of Schauder bases that the following conditions are equivalent:
\begin{enumerate}
\item[(i)] The system $G(1,p/q,\A)$  is a Schauder basis for $\G$ with the ordering induced by $\sigma\in\Lambda$
\item[(ii)] The partial sum operators $T^\sigma_J $ are uniformly bounded on $\G$.
\end{enumerate}
This leads to the following result, Theorem \ref{th:2}. The first part of the proof of Theorem \ref{th:2}  follows from the approach  outlined by the author in \cite[Corollary 3.4]{MR2737763}. We include here the details for the benefit of the reader.

\begin{theorem}\label{th:2} Let $p,q\in\N$. Suppose $\A=\{g^0,\ldots, g^{L-1}\} \subset L^2(\R)$, and define  $\G=\overline{\Span}\{G(1,p/q,\A)\}$. Let $W$ be the non-negative definite matrix given by \eqref{eq:G}.   
Then the following
  statements are equivalent
  \begin{enumerate}
 \item[(a)] $\sup_{\sigma\in\Lambda}\sup_{J}\|\cT_J^\sigma\|<\infty.$
\item[(b)] $W\in \mathbb{A}_2(T_p)$.
  \end{enumerate}
Moreover, at the critical density $Lq=p$, $W\in \mathbb{A}_2(T_p)$ implies that $G(1,p/q,\A)$ forms a Schauder basis for $L^2(\bR)$. 
\end{theorem}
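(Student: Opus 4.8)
The plan is to reduce the ``Moreover'' statement to the equivalence (a) $\Leftrightarrow$ (b) just established, together with the abstract Schauder basis criterion recalled at the start of Section~\ref{sec:s}. The one genuinely new ingredient is to show that at the critical density $Lq=p$ the hypothesis $W\in\mathbb{A}_2(T_p)$ already forces $\G=L^2(\bR)$, so that being a Schauder basis for $\G$ is the same as being one for the whole space.

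First I would observe that when $Lq=p$ the stacked matrix $G$ from \eqref{eq:G1} is a \emph{square} $p\times p$ matrix. Since $W\in\mathbb{A}_2(T_p)$ implies $W^{-1}\in L^1(T_p)$, the matrix $W=GG^*$ from \eqref{eq:We} is invertible for a.e.\ $(x,u)\in T_p$. For a square matrix $\det(GG^*)=|\det G|^2=\det(G^*G)$, so invertibility of $GG^*$ a.e.\ is equivalent to invertibility of $G^*G$ a.e. By the criterion recorded in the remark following \eqref{eq:We}, namely that $\G=L^2(\bR)$ if and only if $G^*G$ has full rank a.e., I conclude $\G=L^2(\bR)$.

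Next I would invoke the main equivalence of the theorem: since $W\in\mathbb{A}_2(T_p)$ is condition (b), it yields (a), that is $\sup_{\sigma}\sup_{J}\|\cT_J^\sigma\|<\infty$; in particular, for any fixed enumeration $\sigma\in\Lambda$ the partial sum operators $\cT_J^\sigma$ are uniformly bounded on $\G$. The system $G(1,p/q,\A)$ is complete in $\G$ by the definition of $\G$, and because $W^{-1}\in L^1(T_p)$, Proposition~\ref{th:1} guarantees that it possesses a unique dual system. The abstract criterion from Section~\ref{sec:s} then asserts that uniform boundedness of the partial sums, for a complete system with a unique dual, is exactly what is required for the ordered system to be a Schauder basis for $\G$. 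Combining this with $\G=L^2(\bR)$ gives that $G(1,p/q,\A)$, in the ordering induced by any $\sigma\in\Lambda$, is a Schauder basis for $L^2(\bR)$.

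The only step requiring care is the passage from invertibility of $GG^*$ to that of $G^*G$: this is precisely where the critical-density hypothesis $Lq=p$ enters, since it is the squareness of $G$ that links the two Gram matrices. Away from critical density this link breaks down, and one cannot in general deduce $\G=L^2(\bR)$ from $W\in\mathbb{A}_2$ alone, which is why the stronger conclusion is available only when $Lq=p$. Everything else is a direct application of results already in hand.
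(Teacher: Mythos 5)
There is a genuine gap: your proposal only proves the ``Moreover'' clause and treats the equivalence (a) $\Leftrightarrow$ (b) as ``just established,'' but nothing preceding Theorem~\ref{th:2} establishes it --- that equivalence \emph{is} the main content of the theorem and of its proof. What is available beforehand is only Corollary~\ref{cor:1}, which concerns the \emph{rectangular} partial sums $S_{N_1,N_2}$. Condition (a) quantifies over all enumerations $\sigma\in\Lambda$ and all $J$, and for general $J$ the partial sum $\cT_J^\sigma$ is not a rectangular sum: it equals a rectangular sum $T_{L,K}$ plus a partially completed new row or column. The direction (a) $\Rightarrow$ (b) is easy (every rectangle is realized as $\sigma(\{1,\dots,J\})$ for some $\sigma\in\Lambda$ and $J$, so (a) gives uniform boundedness of the $S_{N_1,N_2}$ and Corollary~\ref{cor:1} applies), but the direction (b) $\Rightarrow$ (a) requires controlling the leftover piece $(\cT_J^\sigma-T_{L,K})f$. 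In the paper this is done by transferring to $L^2(T_p,W)$, writing the leftover as a one-dimensional partial sum in $x$ at the fixed frequency $(K+1)p$ in $u$ plus at most $2Lq-1$ single terms, bounding the single terms via H\"older using $W,W^{-1}\in L^1(T_p)$, and bounding the one-dimensional sum via the fact that $W\in\mathbb{A}_2(T_p)$ forces the slices $W(\cdot,u)$ and $W(x,\cdot)$ to be \emph{uniformly} univariate matrix $A_2$ weights, so the univariate partial sum operators are uniformly bounded as in \eqref{eq:bd}. None of this appears in your argument, and without it the claim that (b) yields uniform boundedness of $\cT_J^\sigma$ for a fixed $\sigma$ (which you then feed into the abstract Schauder basis criterion) is unsupported.

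The part you do prove --- that at critical density $Lq=p$ the condition $W\in\mathbb{A}_2(T_p)$ forces $\G=L^2(\bR)$ --- is correct and essentially the paper's argument: you pass from invertibility of $GG^*$ a.e.\ to invertibility of $G^*G$ a.e.\ via $\det(GG^*)=|\det G|^2=\det(G^*G)$ for the square matrix $G$, where the paper instead uses the rank inequality $p=Lq=\operatorname{rank}(GG^*)\le\operatorname{rank}(G)\le p$; both correctly isolate where the hypothesis $Lq=p$ enters, and your closing appeal to Proposition~\ref{th:1} and the abstract criterion (completeness plus unique dual plus uniformly bounded partial sums) is the right way to finish. But as a proof of Theorem~\ref{th:2} the proposal is incomplete, since the equivalence (a) $\Leftrightarrow$ (b) is left unproved.
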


\begin{remark}
For $L=p=q=1$, the condition in  Theorem \ref{th:2} reduces to the scalar condition $|Zg^0|^2\in A_2(\bT^2)$, which is exactly the condition derived by Powell and Heil in \cite{MR2278667}.
\end{remark}

We will need the following facts for the proof of Theorem \ref{th:2}. We let  $D_N$ denote the univariate Dirichlet kernel  given by
\begin{equation}
  \label{eq:diri}
D_0(t)=1,\qquad D_N(t)=\frac{\sin 2\pi(N+1/2)t}{\sin\pi t},\qquad N\geq 1,  
\end{equation}
and for $f\in L_2(\bT)$, $N\geq 0$,
\begin{equation}\label{eq:sn}
  S_N(f):=\sum_{k=-N}^N \hat{f}(k)e^{2\pi i k\cdot}=f* D_N:=\int_{\bT} f(t)D_N(\cdot-t)\,dt.
  \end{equation}
We lift $S_N$ to the vector setting by letting
$$S_N(\vt):=\sum_{j=0}^{Lq-1} S_N(\langle \vt,\ve_j\rangle)\ve_j.$$
It is known that 
\begin{equation}\label{eq:bd}
  \esssup_{u\in [0,1/p)} \sup_{N} \|S_N\|_{L^2(\bT,W(\cdot,u))\rightarrow L^2(\bT,W(\cdot,u))}<\infty
  \end{equation}
 for weights $W$ satisfying the univariate $A_2$ condition \eqref{muc1}, see \cite[Corollary 3.2]{MR2737763}. This is very closely related to the  fact that the Hilbert transform is bounded for such weights, see \cite{TreVol:1997a,TreVol:1997b}. We refer to \cite{MR2737763} for further details. The same result of course holds for 
 $W(x,\cdot)$.

\begin{proof}[Proof of Theorem \ref{th:2}]
It suffices to consider the operator $\cS^\sigma_J\tau$ on $L^2(T_p,W)$.
Given a
  rectangle $$R=\{-N_1,\ldots,N_1\}\times\{-N_2,\ldots,N_2\},\qquad
  N_1,N_2\in\bN_0,$$ we can use Definition \ref{def:enu} to construct
  an enumeration $\sigma\in \Lambda$ such that
  $\sigma(\{1,\ldots,J\})=R$ for some $J\in \bN$.  Then
  $\cS^{\sigma}_{N\cdot J}={S_{N_1,N_2}}$, and therefore
  $\sup_{N_1,N_2\geq 0}\|S_{N_1,N_2}\|<\infty$. Hence, $W\in
  \mathbb{A}_2$ by Corollary~\ref{cor:1}.
Conversely, we 
fix $f\in \G$, and  pick $\sigma\in \Lambda$. For any $J$ we let $N_J$ be the
largest integer $N_j\leq J$ for which $\cT_{N_j}^\sigma f=T_{L,K}f$, for
some integers $L,K$. Now, by Corollary \ref{cor:1},
$$\|\cT_J^\sigma f\|_{L_2(\R)}\leq \|T_{L,K}f\|_{L_2(\R)}+
 \|(\cT_J^\sigma-T_{L,K})f\|_{L_2(\R)}\leq
C\|f\|_{L_2(\R)}+ \|(\cT_J^\sigma-T_{L,K})f\|_{L_2(\R)}.
$$
Hence, it suffices to bound the norm of the term
\begin{equation}
  \label{eq:diff}
  (T_J^\sigma-T_{L,K})f=\sum_{j=N_J}^J \langle f,F(\tilde{\sigma}(j))\rangle G(\tilde{\sigma}(j)).
\end{equation}
According to Definition \ref{def:enu}, the sum \eqref{eq:diff}
contains terms that have been added to the top and bottom or left and
right side of an rectangle. The cases are treated in a similar fashion. For definiteness, assume that \eqref{eq:diff} adds terms to the top of the rectangle.

We study the equivalent problem in
  $L^2(T_p,W)$.  Pick $\tau$ with $\ZZ(\tau)=f$, so $\ZZ(\cS_J^\sigma\tau)=\cT_J^\sigma f$. 
  Notice that the ordering $\tilde{\sigma}$ given by \eqref{def:sigma} ensures that the sum $(\cS_J^\sigma-S_{L,K})\tau$ 
  can be rewritten 
  \begin{align}
    \label{eq:top1}
(\cS_J^\sigma&-S_{L,K})\vt=\nonumber\\
&\sum_{\ell=0}^{L-1}\sum_{r=0}^{q-1}
\sum_{m=-M}^M \langle \vt, pe^{2\pi i m x}e^{-2\pi i (K+1)pu}\ve_{\ell q+r}\rangle_{L^2(T_p)} e^{2\pi i m x}e^{-2\pi i (K+1)pu}\ve_{\ell q+r}+R,    
  \end{align}
  where the remainder $R$ is a sum of at most $2Lq-1$ terms of the type
  $\langle \vt,pE_{m,n}{\mathbf e}_\ell\rangle E_{m,n}{\mathbf e}_\ell$.
We   observe that, in general,
$$\|\langle \vt,pE_{m,n}{\mathbf e}_\ell\rangle E_{m,n}{\mathbf e}_\ell\|_{L_2(\bT^2;W)}\leq p\|W\|_{L_1(T_p)}\|W^{-1}\|_{L_1(T_p)}\|\vt\|_{L_2(T_p,W)},$$
which follows from H\"older's inequality. We can therefore use the triangle inequality to uniformly estimate the remainder $R$ in \eqref{eq:top1} in terms of $\|\vt\|_{L_2(\bT^2;W)}$.
Next, we notice that,
\begin{align*}
\bigg\|\sum_{\ell=0}^{L-1}\sum_{r=0}^{q-1}
\sum_{m=-M}^M \langle \vt,& pe^{2\pi i m x}e^{-2\pi i (K+1)pu}\ve_{\ell q+r}\rangle_{L^2(T_p)}e^{2\pi i m x}e^{-2\pi i (K+1)pu}\ve_{\ell q+r}\bigg\|_{L_2(T_p,W)}\\&=
 \bigg\|\sum_{\ell=0}^{L-1}\sum_{r=0}^{q-1}
\sum_{m=-M}^M p\langle \vt e^{2\pi i (K+1)pu}, e^{2\pi i m x}\ve_{\ell q+r}\rangle_{L^2(T_p)} e^{2\pi i m x}\ve_{\ell q+r}\bigg\|_{L_2(T_p,W)}.
\end{align*}
For notational convenience, we define the vector function 
$$\vf(x):=p\int_0^{1/p} \vt(x,u) e^{2\pi i (K+1)pu}\,du=S_0(\vt(x,\cdot) e^{2\pi i (K+1)p\cdot}),$$
where $S_0$ is the $0$-order partial sum operator.
We recall that $W(\cdot,u)$ and $W(x,\cdot)$ are uniformly matrix $A_2$-weights for a.e.\ $u$ and a.e.\ $x$, respectively. We now use that the boundedness estimate \eqref{eq:bd} to obtain that
\begin{align*}
 \bigg\|\sum_{\ell=0}^{L-1}\sum_{r=0}^{q-1}
\sum_{m=-M}^M& p\langle \vt e^{2\pi i (K+1)pu}, e^{2\pi i m x}\ve_{\ell q+r}\rangle_{L^2(T_p)} e^{2\pi i m x}\ve_{\ell q+r}\bigg\|_{L_2(T_p,W)}\\&=
\int_0^{1/p}\int_0^1 |W^{1/2}(x,u)D_M*\vf|^2 dxdu\\
&\leq C \int_0^{1/p}\int_0^1 |W^{1/2}(x,u)\vf|^2 dxdu\\
 &= C  \int_0^{1/p}\int_0^1  |W^{1/2}(x,u)S_0(\vt(x,\cdot) e^{2\pi i (K+1)p\cdot})|^2 dudx\\
 &\leq C'  \int_0^{1/p}\int_0^1 |W^{1/2}(x,u)\vt e^{2\pi i (K+1)pu}|^2 dudx\\
 &=C'\|\vt\|_{L_2(T_p,W)}^2,
\end{align*}
where we also used that $S_0$ is bounded uniformly on $L_2(\bT;W(x,\cdot))$ for a.e.\ $x$.
 Collecting the estimates, we conclude that
$$\|(T_J^\sigma-T_{L,K})f\|_{L_2(\bR)}=
\|(S_J^\sigma-S_{L,K})\vt\|_{L_2(T_p,W)}\leq C'\|\vt\|_{L_2(T_p,W)}=C'
\|f\|_{L_2(\bR)},$$
with $C'$ independent of $J$.

For the last part we notice that whenever the $Lq\times Lq$ matrix $W=GG^*$ is positive definite a.e.\ then $p=Lq=\text{rank}(GG^*)\leq \text{rank}(G)\leq p$ a.e.\ since $G$ is of size $Lq\times p$.  Hence, $\text{rank}(G)=p$ a.e. The fact that $\G=L^2(\bR)$  now follows from \cite[Theorem 2]{MR1448221}.
\end{proof}

\begin{remark}
Suppose that  $G(1,p/q,\A)$ ordered by $\sigma\in \Lambda$ forms a Schauder basis for $\G=\overline{\Span}\{G(1,p/q,\A)\}$ with $\A=\{g^0,\ldots, g^{L-1}\}$. Then clearly by Theorem \ref{th:2} the corresponding weight $W$ satisfies $W\in A_2(T_p)$.  
It is easy to check directly that for any subset $\A'\subset \A$, $G(1,p/q,\A')$ forms a Schauder basis for $\G'=\overline{\Span}\{G(1,p/q,\A')\}$. Let  $W'$ be the weight corresponding to $G(1,p/q,\A')$, where we notice that $W'$ is a submatrix of $W$. We deduce from Theorem \ref{th:2} that the submatrix $W'$ must also belong to  $A_2(T_p)$. 
  \end{remark}

\section{Example}
We conclude this paper by giving an example of a conditional multiple-generated Gabor Schauder basis for $L^2(\R)$. Let us consider the case $L=p$, $p\geq 2$, and $q=1$. Take univariate polynomials $P_0(x),\ldots P_{L-1}(x)$ and exponents $a_0,\ldots,a_{L-1}\in\bR$ satisfying 
$$-1<\deg(P_j)a_j<1,\qquad j=0,1,\ldots, L-1.$$
Then it is well known that $|P_i|^{a_i}$ satisfied the scalar $A_2$-condition, i.e.,
\begin{equation}\label{eq:aa2}
  \sup_I\left(\frac1{|I|}\int_I |P_j|^{a_j} dx\cdot \frac1{|I|}\int_I |P_j|^{-a_j} dx\right)<+\infty,
  \end{equation}
where the sup is over all intervals $I\subset [0,1)$, see \cite[Chap.\ 5]{MR1232192}.
We now put 
$$g^\ell(x):=\chi_{[0,1)}(x+\ell)|P_\ell(x+\ell)|^{a_\ell/2}.$$
It is easy to verify that $g^\ell\in L^2(\R)$. Notice that
$$Zg^\ell(x,u)=|P_\ell(x)|^{a_\ell/2}e^{2\pi i \ell u}.$$
We now form the matrix $W=GG^*$ defined by \eqref{eq:We}. Notice that entry $r,s$ of $W$ is given by 
\begin{align*}
\sum_{k=0}^{L-1} Zg^r\bigg(x,u+\frac{k}L\bigg)\overline{Zg^s\bigg(x,u+\frac{k}L\bigg)}
&=\sum_{k=0}^{L-1} |P_r(x)|^{a_r/2}|P_s(x)|^{a_s/2} e^{2\pi i r(u+k/L)}e^{-2\pi i s(u+k/L)}\\
&=|P_r(x)|^{a_r/2}|P_s(x)|^{a_s/2}e^{2\pi i (r-s) u} \sum_{k=0}^{L-1}e^{2\pi i (r-s)k/L}\\
&=L\delta_{r,s} |P_r(x)|^{a_r}.
\end{align*}
It is now straightforward to use \eqref{eq:aa2} to verify that the diagonal matrix $W\in \mathbb{A}_2$. Hence, for $\A=\{g^0,\ldots,g^{L-1}\}$, the system
$G(1,L,\A)$ forms a Schauder basis for $L^2(\bR)$ according to Theorem \ref{th:2} since $Lq=L=p$.

Moreover, we notice that by choosing appropriate polynomials $P_j$, we can easily obtain a matrix $G$ containing unbounded row vectors on $[0,1)\times [0,1/p)$ and/or  row vectors not bounded away from 0 in norm on $[0,1)\times [0,1/p)$. It thus follows from \cite[Theorem 2.2]{MR2346803} that the corresponding system $G(1,L,\A)$ cannot form an unconditional Riesz basis for $L^2(\bR)$. We thus obtain  an example of a conditional multiple-generated Gabor Schauder basis for $L^2(\R)$.
\bibliographystyle{abbrv}

\end{document}